\theoremstyle{plain}
\newtheorem{thm}{Theorem}[section]
\newtheorem{corollary}[thm]{Corollary}
\newtheorem{lemma}[thm]{Lemma}
\newtheorem{conjecture}{Conjecture}
\theoremstyle{definition}
\newtheorem{definition}[thm]{Definition}
\newtheorem*{prob}{Problem}
\newcommand{\rad}{\operatorname{rad}}
\newcommand{\Li}{\operatorname{Li}}
\newcommand{\Hom}{\operatorname{Hom}}
\newcommand{\bbF}{\mathbb{F}}
\newcommand{\bbZ}{\mathbb{Z}}
\newcommand{\bbN}{\mathbb{N}}
\newcommand{\bbR}{\mathbb{R}}
\newcommand{\bfP}{\mathbf{P}}
\newcommand{\bfE}{\mathbf{E}}
\newcommand{\bfVar}{\mathrm{Var}}
\renewcommand{\Re}{\mathrm{Re}}
\renewcommand{\epsilon}{\varepsilon}
\numberwithin{equation}{section}
\title[Weil abscissa for procyclic groups]{On the abscissae of Weil representation zeta functions for procyclic groups}
\author{Steffen Kionke}
\address{FernUniversit\"at in Hagen, Fakult\"at f\"ur Mathematik und Informatik, 58084 Hagen, Germany}
\email{steffen.kionke@fernuni-hagen.de}
\subjclass[2010]{Primary 20P05;
Secondary 20E18, 11M41}
\date{\today}
\begin{document}
\renewcommand{\labelenumi}{(\roman{enumi})}
\renewcommand{\labelenumii}{(\alph{enumii})}

\begin{abstract}
A famous conjecture of Chowla on the least primes in arithmetic progressions implies that the abscissa of convergence of the Weil representation zeta function for a procyclic group $G$ only depends on the set $S$ of primes dividing the order of $G$ and that it agrees with the abscissa of the Dedekind zeta function of $\mathbb{Z}[p^{-1}\mid p \not\in S]$. 
Here we show that these consequences hold unconditionally for random procyclic groups in a suitable model. As a corollary, every real number $1 \leq \beta \leq 2$ is the Weil abscissa of some procyclic group.
\end{abstract}

\maketitle

\section{Introduction}
Dirichlet generating functions are a well-established tool in enumerative algebra, especially in asymptotic group theory; see for instance \cite{duSautoySegal, Voll1}. In particular, generating functions enumerating complex irreducible representations of infinite groups received considerable attention; see~\cite{AKOV,BudurZordan,KiKl,LL,StasinskiVoll} and references therein. In \cite{CCKV24} a generating function enumerating absolutely irreducible representations of a (profinite) group $G$ over all finite fields was defined that is reminiscent of the Hasse-Weil zeta function of an algebraic variety $V$; cf.~\cite{Weil}.
This \emph{Weil representation zeta function} $\zeta_G^W(s)$ of a profinite group $G$
converges absolutely for $\Re(s)$ sufficiently large, if $G$ has ``uniformly bounded exponential 
representation growth'' (UBERG); see \cite[Cor.~2.3]{CCKV24} and \cite{KV} for more on UBERG groups. The abscissa of convergence $\alpha(G)$ of $\zeta^W_G(s)$ -- called \emph{Weil abscissa} here -- is an intricate invariant. The Weil abscissa of free pronilpotent and free prosoluable groups was determined in~\cite[Thm.~C]{CCKV24} . 

\medskip

Let $G$ be a finitely generated profinite abelian group. Then $G$ has UBERG and -- since absolutely irreducible representations of $G$ are one-dimensional -- the Weil representation zeta function
is 
\begin{equation*}\label{eq:defn_zeta}
	\zeta^W_G(s) := \exp  \Bigl( \sum_{p \in \mathcal{P}}\sum_{j=1}^\infty |\Hom(G,\bbF_{p^j}^\times)| \frac{p^{-sj}}{j}\Bigr)
\end{equation*}
where $\mathcal{P}$ denotes the set of primes.
The Weil abscissa is known for some abelian groups, e.g.~$\alpha(\widehat{\bbZ}^r) = r+1$ and $\alpha(\bbZ_p) = 1$ for every prime $p$ \cite[Thm.~C]{CCKV24}.
The investigations for $G = \bbZ_p^r$ in \cite{CCKV24}  revealed that the 
Weil abscissa can be related to questions about primes in arithmetic progressions.

\medskip

For every non-empty set $S$ of primes, define the procyclic groups $H_S =  \prod_{p \in S} \bbZ_p$ and $G_S = \prod_{p\in S} C_p$ (here $C_p$ is cyclic of order $p$). Here we investigate the Weil abscissa of $H_S$ and $G_S$ inspired by the following conjecture.

\begin{conjecture}\label{conjA}
For every set $S$ of primes
$\alpha(G_S) = \alpha(H_S) = \alpha_D(S)+1$. 
\end{conjecture}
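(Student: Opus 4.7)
The plan is to write $\log \zeta^W_G(s)$ in closed form via divisor sums, reduce to the leading $j=1$ diagonal, and sandwich the resulting Dirichlet series between two explicit series of abscissa $\alpha_D(S)+1$. A useful preliminary observation: since $G_S$ is a continuous quotient of $H_S$, the inclusion $\Hom(G_S, \bbF_q^\times) \subseteq \Hom(H_S, \bbF_q^\times)$ yields $\alpha(G_S) \leq \alpha(H_S)$, so it suffices to establish the upper bound $\alpha(H_S) \leq \alpha_D(S) + 1$ unconditionally and the lower bound $\alpha(G_S) \geq \alpha_D(S) + 1$ using Chowla.

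The first step is a multiplicative identity. Let $\mathcal{D}_S$ (resp.\ $\mathcal{D}_S^{\mathrm{sqfr}}$) denote the positive (squarefree) integers all of whose prime factors lie in $S$. An elementary check on prime powers gives
\[
|\Hom(H_S, \bbF_p^\times)| = \sum_{d \mid p-1,\, d \in \mathcal{D}_S} \phi(d), \qquad |\Hom(G_S, \bbF_p^\times)| = \sum_{d \mid p-1,\, d \in \mathcal{D}_S^{\mathrm{sqfr}}} \phi(d),
\]
and swapping summations yields, for $G \in \{G_S, H_S\}$ and the appropriate $\mathcal{D}$,
\[
\sum_p |\Hom(G, \bbF_p^\times)|\, p^{-s} = \sum_{d \in \mathcal{D}} \phi(d) \sum_{p \equiv 1 (\bmod\, d)} p^{-s}.
\]
The higher-degree contributions $j \geq 2$ must then be shown to have abscissa at most $\alpha_D(S)+1$: the trivial bound $|\Hom(G, \bbF_{p^j}^\times)| \leq p^j$ already handles the regime $\alpha_D(S) \geq 1/2$, while for sparser $S$ one uses the cyclotomic factorisation $p^j - 1 = \prod_{e \mid j} \Phi_e(p)$ together with the slow growth of $v_q(p^j - 1)$ in $j$ (via lifting-the-exponent) to estimate analogous divisor sums.

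The upper bound then follows from the elementary estimate $\sum_{p \equiv 1 (\bmod\, d)} p^{-s} \leq d^{-s}\zeta(s)$, valid on $\Re(s) > 1$, which yields
\[
\sum_p |\Hom(H_S, \bbF_p^\times)|\, p^{-s} \leq \zeta(s) \prod_{p \in S} \frac{1 - p^{-s}}{1 - p^{1-s}},
\]
a series of abscissa exactly $\alpha_D(S) + 1$. For the lower bound one retains only the least prime $p(d,1) \equiv 1 \pmod d$ in each progression; Chowla's conjecture $p(d,1) \ll_\varepsilon d^{1+\varepsilon}$ then produces
\[
\sum_p |\Hom(G_S, \bbF_p^\times)|\, p^{-s} \;\gg_\varepsilon\; \sum_{d \in \mathcal{D}_S^{\mathrm{sqfr}}} \phi(d)\, d^{-(1+\varepsilon)s} \;=\; \prod_{p \in S}\bigl(1 + (p-1) p^{-(1+\varepsilon)s}\bigr),
\]
and the right-hand side diverges once $(1+\varepsilon)s \leq \alpha_D(S) + 1$. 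Letting $\varepsilon \downarrow 0$ gives $\alpha(G_S) \geq \alpha_D(S) + 1$. The main obstacle is this lower bound: Chowla's conjecture is the only currently known input strong enough to deliver the right exponent, and all unconditional bounds on the least prime in an arithmetic progression (Linnik's theorem, or the GRH-based exponent near $2$) fall strictly short of the $\alpha_D(S)+1$ threshold -- which is presumably why the paper turns to a random model.
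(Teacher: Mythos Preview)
Your overall architecture matches the paper's conditional argument (Lemma~\ref{lem:basic-lemma}): the upper bound $\alpha(H_S)\le \alpha_D(S)+1$ is unconditional, and the lower bound $\alpha(G_S)\ge \alpha_D(S)+1$ is exactly what Chowla's conjecture $L=1+\epsilon$ delivers. The paper does not prove Conjecture~\ref{conjA}; it records precisely this implication and then passes to the random model.

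Two differences in execution are worth noting. For the \emph{upper bound}, the paper does not split off $j=1$. It observes $|\Hom(H_S,\bbF_{p^j}^\times)|=(p^j-1)_S$, packages the double sum as $\sum_{n\ge 2}(n-1)_S\,\Lambda'(n)\,n^{-s}$ with $\Lambda'(n)=\Lambda(n)/\log n\le 1$, and bounds this by $\sum_{m\ge 1} m_S\,m^{-s}=\zeta_S(s-1)\zeta_{S'}(s)$. This handles all $j$ simultaneously and for every $S$; your cyclotomic/LTE sketch for $j\ge 2$ in the regime $\alpha_D(S)<\tfrac12$ is plausible but would need genuine work to turn into a proof, whereas the von~Mangoldt trick is a one-liner. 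For the \emph{lower bound}, your divisor-swap formulation
\[
\sum_{p}|\Hom(G_S,\bbF_p^\times)|\,p^{-s}=\sum_{d\in\mathcal{D}_S^{\mathrm{sqfr}}}\phi(d)\sum_{p\equiv 1\,(d)}p^{-s}\ \ge\ \sum_{d\in\mathcal{D}_S^{\mathrm{sqfr}}}\phi(d)\,p(d,1)^{-s}
\]
is in fact cleaner than the paper's: because the swap is exact for positive terms, you never have to worry that one prime serves as $p(d,1)$ for many $d$, and so you avoid the divisor-bound correction $d(p-1)=O_\epsilon(p^\epsilon)$ that the paper inserts. Either way the conclusion is the same, and your final paragraph correctly identifies why the conjecture remains open.
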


Here $\alpha_D(S)$ denotes the abscissa of convergence of the partial Riemann zeta function $\zeta_S(s) = \prod_{p \in S}(1-p^{-s})^{-1}$. We note that $\zeta_S$ agrees with the Dedekind zeta function of the ring $\bbZ_{S'} = \bbZ[\frac{1}{p} \mid p \not\in S]$. 

Conjecture \ref{conjA} is prompted by number theoretic considerations. With a bit of elementary number theory one can show (see Lemma \ref{lem:basic-lemma} below)
\[
	\frac{1}{L}(\alpha_D(S)+1) \leq \alpha(G_S) \leq \alpha(H_S) \leq \alpha_D(S)+1.
\]
where $L$ is a Linnik constant, i.e., a constant $L$ such that the least prime $p \equiv b \mod m$ satisfies $p \ll m^L$ for all $m,b$ with $(m,b)=1$.
Linnik showed that such a constant exists. Currently $L=5$ is the best known value \cite{Xylouris}.
Chowla conjectured that $L = 1+\epsilon$ is a Linnik constant for every $\epsilon > 0$; see \cite{Chowla1934}. Hence Chowla's conjecture implies Conjecture \ref{conjA}. However, currently Chowla's conjecture seems to be out of reach and (as Chowla noticed) even the extended Riemann hypothesis only gives $L = 2+\epsilon$ for every $\epsilon > 0$.

\medskip

Conjecture \ref{conjA} has interesting consequences. It implies that the Weil abscissa 
of a procyclic group $G$ only depends on the set $S$ of primes dividing the order of $G$. Another consequence is:
\begin{conjecture}\label{conjVal}
Every real number $1\leq \beta \leq 2$ is the abscissa of convergence of a procyclic group.
\end{conjecture}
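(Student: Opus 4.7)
The strategy is to combine the paper's unconditional random version of Conjecture \ref{conjA} (advertised in the abstract) with an elementary number-theoretic construction producing sets of primes with prescribed Dedekind abscissa.

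\emph{Step 1: Every value of $\alpha_D(S)$ in $[0,1]$ is attained.} Since the Dirichlet series $\log \zeta_S(s)$ and $\sum_{p \in S} p^{-s}$ differ by one that converges for $\Re(s) > 1/2$, and since both have non-negative coefficients, the classical formula for the abscissa of a positive Dirichlet series yields
\[
\alpha_D(S) = \limsup_{N \to \infty} \frac{\log \pi_S(N)}{\log N}, \qquad \pi_S(N) := |\{p \in S : p \leq N\}|.
\]
Given $\theta \in (0,1]$ I would take $S_\theta := \{p_{\lfloor k^{1/\theta}\rfloor} : k \geq 1\}$, where $p_n$ is the $n$-th prime. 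The prime number theorem $p_n \sim n \log n$ gives $\pi_{S_\theta}(N) = N^{\theta + o(1)}$, so $\alpha_D(S_\theta) = \theta$. For $\theta = 0$ a single prime suffices. Hence every value in $[0,1]$ is realised as $\alpha_D(S)$ for some non-empty prime set $S$.

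\emph{Step 2: Realise $\beta$ via the main theorem.} Given $\beta \in [1,2]$, set $\theta := \beta - 1$ and take $S = S_\theta$ from Step 1. The unconditional theorem of the paper (random version of Conjecture \ref{conjA}) asserts that a typical procyclic group with prime support $S$ has Weil abscissa $\alpha_D(S)+1 = \beta$. In particular, at least one such procyclic group exists and realises $\beta$. The endpoints $\beta = 1, 2$ are additionally witnessed explicitly by $\bbZ_p$ and $\widehat{\bbZ}$ thanks to the known values $\alpha(\bbZ_p)=1$ and $\alpha(\widehat{\bbZ})=2$.

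\emph{Main obstacle.} The substantive difficulty is not in this corollary but in the unconditional random theorem itself, which substitutes a probabilistic argument for Chowla's conjecture and presumably forms the technical heart of the paper. The only point to verify for the corollary is that the random model from the main theorem can accommodate arbitrary (and possibly very thin) prime supports $S_\theta$; the unqualified phrasing in the abstract suggests that this is built into the model, in which case Conjecture \ref{conjVal} is immediate.
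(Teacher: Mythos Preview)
Your proposal contains a genuine misreading of the paper's main theorem. Theorem~\ref{thm:main-1} does \emph{not} say that, for a fixed set $S$ of primes, a random procyclic group supported on $S$ has Weil abscissa $\alpha_D(S)+1$. Rather, the set $S$ itself is the random object: each prime $p$ is included independently with probability $(p^{1-\delta}-1)/(p-1)$, and the conclusion is that almost surely $\alpha(G_S)=\alpha(H_S)=2-\delta$. A deterministic set such as your $S_\theta$ has measure zero in this model, so the theorem says nothing about $\alpha(G_{S_\theta})$. Your ``Main obstacle'' paragraph in fact identifies exactly this issue and then waves it away on the basis of the abstract's wording; that dismissal is unwarranted.

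Consequently, Step~1 is entirely superfluous and Step~2 does not go through. The actual deduction the paper has in mind is much shorter: given $\beta\in(1,2)$, put $\delta=2-\beta\in(0,1)$. Theorem~\ref{thm:main-1} asserts that the event $\{\alpha(G_S)=2-\delta=\beta\}$ has probability~$1$ in the $\delta$-model, so in particular it is non-empty and some procyclic group $G_S$ with $\alpha(G_S)=\beta$ exists. The endpoints $\beta=1$ and $\beta=2$ are handled, as you note, by $\bbZ_p$ and $\widehat{\bbZ}$. No control over $\alpha_D(S)$ for a prescribed $S$ is needed; the parameter $\delta$ already dials the abscissa directly.
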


The purpose of this paper is to prove Conjecture \ref{conjA} almost surely for randomly chosen sets $S$ of primes. This will enable us to deduce the validity of Conjecture \ref{conjVal}.

\medskip

Let \( \delta \in (0,1) \) be given. We say that a set $S$ of prime numbers is \emph{random w.r.t. the $\delta$-model}, if the events $p \in S$ and $p' \in S$ are independent for $p \neq p'$ and occur with probability 
\[ \bfP(p\in S) = \frac{p^{1-\delta}-1}{p-1}. \]

\begin{thm}\label{thm:main-1}
Let $S$ be a random set of primes w.r.t.\ the $\delta$-model. Then almost surely
\[
	\alpha(G_S) = \alpha(H_S) = \alpha_D(S)+1 = 2-\delta.
\] 
\end{thm}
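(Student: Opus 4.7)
The plan is to use Lemma \ref{lem:basic-lemma}, which unconditionally gives the chain $\alpha(G_S) \leq \alpha(H_S) \leq \alpha_D(S) + 1$, so it suffices to establish almost surely that (a) $\alpha_D(S) = 1-\delta$, and (b) $\alpha(G_S) \geq 2-\delta$.

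For (a), write $X_p := \mathbf{1}_{p \in S}$; the $X_p$ are independent Bernoulli variables with $\bfP(X_p = 1) = (p^{1-\delta}-1)/(p-1) \sim p^{-\delta}$. The bounded summands $Y_p = X_p p^{-s}$ satisfy $\bfE[Y_p] \asymp p^{-s-\delta}$ and $\bfVar[Y_p] \lesssim p^{-2s-\delta}$. Kolmogorov's three-series theorem then shows that $\sum_{p \in S} p^{-s}$ converges almost surely for $s > 1-\delta$ and (being a non-negative series with divergent mean) diverges to $+\infty$ almost surely for $s \leq 1-\delta$. Hence $\alpha_D(S) = 1-\delta$ a.s.

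For (b), since $G_S$ is abelian and $|\bbF_q^\times| = q-1$, the $j=1$ summands in $\log \zeta^W_{G_S}(s)$ already yield
\[
    \log \zeta^W_{G_S}(s) \ \geq\ \sum_{q \in \mathcal{P}} f_S(q)\, q^{-s}, \qquad f_S(q) := \prod_{\substack{p \in S \\ p\, \mid\, q-1}} p.
\]
The key computation in the $\delta$-model is $\bfE[p^{X_p}] = 1 + (p-1)\bfP(p \in S) = p^{1-\delta}$, which by independence across $p$ gives the clean identity
\[
    \bfE[f_S(q)] \ =\ \prod_{p \mid q-1} p^{1-\delta} \ =\ \rad(q-1)^{1-\delta}.
\]
Together with the classical fact that $\rad(q-1) \geq q^{1-\epsilon}$ for a density-one set of primes and every $\epsilon > 0$, this forces $\sum_q \bfE[f_S(q)]\, q^{-s} = +\infty$ whenever $s < 2-\delta$.

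To promote divergence in mean to divergence almost surely, I would analyze the truncations $F_N(s) := \sum_{q \leq N} f_S(q)\, q^{-s}$ via the Paley--Zygmund inequality. A parallel computation yields, for distinct primes $q, q'$,
\[
    \bfE[f_S(q) f_S(q')] \ =\ \bfE[f_S(q)]\, \bfE[f_S(q')] \prod_{p \mid \gcd(q-1, q'-1)} \frac{1 + (p^2-1)\bfP(p \in S)}{p^{2(1-\delta)}},
\]
with correction factor essentially $\rad(\gcd(q-1, q'-1))^{\delta}$. Since $\{F_N(s) \to \infty\}$ is a tail event in the $X_p$'s, Kolmogorov's $0$--$1$ law combined with any uniform positive bound $\bfP(F_N(s) \geq c\, \bfE F_N(s)) \geq \kappa > 0$ then yields the desired a.s.\ divergence. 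The main obstacle is precisely this second-moment estimate: one must control the weighted average of $\rad(\gcd(q-1,q'-1))^{\delta}$ over prime pairs $q,q' \leq N$, which reduces to a sieve estimate on common prime divisors of shifted primes and becomes delicate at the critical exponent $s = 2-\delta$.
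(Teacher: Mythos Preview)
Your overall strategy coincides with the paper's: establish the upper bound via $\alpha_D(S)\le 1-\delta$ (the paper does this by showing $\bfE\bigl[\sum_{m\in\langle S\rangle} m^{-s}\bigr]<\infty$ for $s>1-\delta$; your three-series argument is an equivalent route), and obtain the lower bound by a second-moment argument on the partial sums of $\sum_{q} \rad_S(q-1)\,q^{-s}$ followed by Kolmogorov's $0$--$1$ law. Your identity $\bfE[f_S(q)]=\rad(q-1)^{1-\delta}$ and the shape of the cross-moment $\bfE[f_S(q)f_S(q')]$ are exactly what the paper uses.

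The genuine gap is that you stop at the point you yourself flag as the main obstacle: the second-moment estimate. This is not a technicality to be waved through---it is the heart of the proof, and the paper devotes a separate lemma to it. Writing $X_t=\sum_{p\le t} R_{p-1}\,p^{\delta-1}$ with $R_n=\rad_S(n)$, one has $\bfE(X_t^2)\le 2\sum_{a\le t} a^\delta\,\pi(t,a,1)^2$, and the paper proves (Lemma~\ref{lem:bombieri-vinogradov}) that this sum equals $b_\delta\,\Li(t)^2+O\bigl(t^2/\log^A t\bigr)$ via the Bombieri--Vinogradov theorem: the latter controls $\pi(t,a,1)-\Li(t)/\phi(a)$ on average over $a\le t^\theta$ for $\theta<\tfrac12$, which is precisely what turns your ``sieve estimate on common prime divisors of shifted primes'' into a clean bound. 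Note also a useful device that sidesteps the delicacy you anticipate at the critical exponent: the paper does \emph{not} fix $s<2-\delta$ but takes the exponent $\delta-1$ in $X_t$, proves $\bfVar(X_t)=O\bigl(\bfE(X_t)^2\bigr)$ with $\bfE(X_t)\asymp\Li(t)$ (the lower bound coming from Mirsky's count of primes $p$ with $p-1$ squarefree, which is cleaner than your density-one claim for $\rad(q-1)\ge q^{1-\epsilon}$), and only afterwards passes to $s=2-\delta-\epsilon$ via the trivial inequality $\sum_{p\le t} R_{p-1}\,p^{\delta-2+\epsilon}\ge t^{-1+\epsilon}X_t$.
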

The key step in the proof of Theorem \ref{thm:main-1} is to bound the variance of a family of random variables using the Bombieri-Vinogradov theorem. 
\begin{corollary}\label{cor:conjVal}
Conjecture \ref{conjVal} holds true.
\end{corollary}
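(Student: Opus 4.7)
The plan is to deduce Conjecture \ref{conjVal} as an almost immediate consequence of Theorem \ref{thm:main-1} together with the two known procyclic examples recorded in the introduction. For any $\beta\in(1,2)$ I would set $\delta := 2-\beta\in(0,1)$ and apply Theorem \ref{thm:main-1} in the $\delta$-model: the theorem guarantees that the event $\{\alpha(H_S)=2-\delta\}=\{\alpha(H_S)=\beta\}$ has probability one, so in particular it is non-empty. Picking any realization $S$ of this event yields a procyclic group $H_S=\prod_{p\in S}\bbZ_p$ with Weil abscissa exactly $\beta$. Thus every interior value in $(1,2)$ is realized.

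It remains to cover the two endpoints. For $\beta=1$ one may take $G=\bbZ_p$ for any prime $p$, since $\alpha(\bbZ_p)=1$ was established in \cite[Thm.~C]{CCKV24}. For $\beta=2$ one uses $G=\widehat{\bbZ}=\prod_p \bbZ_p$, which is procyclic and satisfies $\alpha(\widehat{\bbZ})=2$ by the same reference (the $r=1$ case of $\alpha(\widehat{\bbZ}^r)=r+1$). Combining these with the construction in the previous paragraph exhibits, for every $\beta\in[1,2]$, a procyclic group whose Weil abscissa is $\beta$, which is exactly the statement of Conjecture \ref{conjVal}. The only substantive content is already absorbed into Theorem \ref{thm:main-1}; translating an almost-sure existence into an actual existence is just the standard observation that a probability-one event is non-empty.
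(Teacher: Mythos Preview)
Your argument is correct and is exactly the intended deduction: the paper presents Corollary~\ref{cor:conjVal} as an immediate consequence of Theorem~\ref{thm:main-1} (realizing each $\beta\in(1,2)$ via a set $S$ in the full-measure event of the $(2-\beta)$-model), with the endpoints handled by the known examples $\bbZ_p$ and $\widehat{\bbZ}$ from \cite[Thm.~C]{CCKV24}.
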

The random model used here can be perturbed slightly without changing the result.
It should be noted that $G_S$ and $H_S$ are the minimal and maximal procyclic groups such that $S$ is the set of primes dividing the order. This shows that for every ``random'' procyclic group $G$ such that the primes dividing the order of $G$ are distributed in the $\delta$-model, the Weil abscissa satisfies $\alpha(G) = 2- \delta$ almost surely.

\smallskip

It seems desirable to understand in detail for which sets $S$ the Weil abscissae of $G_S$ and $H_S$ are maximal, i.e. take the value $2$. In other words, how ``small'' can a set $S$ with $\alpha(G_S) = 2$ be? 
Modifying the proof of a result of Mirsky \cite{Mirsky}, we derive the following theorem.

\begin{thm}\label{thm:main-2}
Let $S$ be a thick set of primes. Then $\alpha(G_S) = \alpha(H_S) = 2$.
\end{thm}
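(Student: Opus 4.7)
The plan is to treat the two bounds separately. The upper bound $\alpha(G_S) \leq \alpha(H_S) \leq \alpha_D(S) + 1 \leq 2$ follows immediately from Lemma~\ref{lem:basic-lemma} together with the trivial bound $\alpha_D(S) \leq 1$ valid for any $S \subseteq \mathcal{P}$. The content of the theorem is therefore the lower bound $\alpha(G_S) \geq 2$.

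Since $\log \zeta^W_{G_S}$ is a Dirichlet series with non-negative coefficients, it suffices to show that its $j = 1$ contribution
$$\Phi(s) := \sum_{p} h(p-1)\, p^{-s}, \qquad h(n) := \prod_{q \in S,\, q \mid n} q,$$
diverges for every $s < 2$. By partial summation, this reduces to a partial-sum lower bound of the form $\sum_{p \leq X} h(p-1) \gg X^{2}/(\log X)^{A}$ for some $A = A(S)$ and all sufficiently large $X$. To prove such a bound, the starting point would be the identity $h(n) = \sum_{d \mid n,\ d \in \mathcal{D}_S^{sf}} \phi(d)$, where $\mathcal{D}_S$ denotes the set of positive integers all of whose prime factors lie in $S$ and $\mathcal{D}_S^{sf}$ its squarefree members. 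Interchanging summation,
$$\sum_{p \leq X} h(p-1) = \sum_{d \in \mathcal{D}_S^{sf}} \phi(d)\, \pi(X;d,1).$$

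The strategy from here is to adapt Mirsky's method for sums of multiplicative functions on shifted primes. For $d \leq X^{1/2}/(\log X)^{B}$, in the Bombieri--Vinogradov range, the expected main term $\Li(X) \cdot |\{d \in \mathcal{D}_S^{sf} : d \leq X^{1/2}/(\log X)^{B}\}|$ can be extracted cleanly, and the thickness hypothesis on $S$ would guarantee that this divisor count is large enough to contribute substantially. For larger moduli one imitates Mirsky's rearrangement: swap the roles of $d$ and $(p-1)/d$, parametrise $p - 1 = a b$ with $a \in \mathcal{D}_S$ and $b$ having no prime factor in $S$, and estimate the resulting sum over $b$ via a sieve; thickness of $S$ is what ensures that enough mass remains after the sieving.

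The hard part will be overcoming the level-of-distribution barrier. Bombieri--Vinogradov only gives uniform control for $d$ up to $X^{1/2}$, while a naive main-term extraction of magnitude $X^{2}/(\log X)^{O(1)}$ would demand moduli as large as $X^{1-o(1)}$. The crux of the argument is therefore to reorganise the sum so that this barrier is bypassed, making Mirsky's trick effective when $h$ is restricted by the set $S$ rather than being the unrestricted radical, and turning the thickness hypothesis into a quantitative lower bound on the contribution from the problematic range of $d$.
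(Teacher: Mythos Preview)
Your upper-bound argument is fine and matches the paper's. The gap is in the lower bound.

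You aim to prove $\sum_{p \leq X} h(p-1) \gg X^{2}/(\log X)^{A}$ by expanding $h(p-1) = \sum_{d \mid p-1,\ d \in \mathcal{D}_S^{sf}} \phi(d)$ and handling the resulting sum over moduli with Bombieri--Vinogradov plus a ``Mirsky rearrangement'' for large $d$. But, as you yourself note, the main term requires moduli up to $X^{1-o(1)}$, far beyond level $X^{1/2}$. Mirsky's trick in the original setting works because the tail $\sum_{d > Y} \pi(X; d^{k}, 1)$ is controlled by the trivial bound $\pi(X;d^k,1)\le X/d^k$ with $k\ge 2$; in your expansion the analogous tail is $\sum_{d > Y} \phi(d)\,\pi(X;d,1)$, where the trivial bound gives terms of size $X$, not summable. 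The ``swap $d \leftrightarrow (p-1)/d$'' idea does not fix this: writing $p-1 = ab$ with $a \in \mathcal{D}_S$, $b \in \mathcal{D}_{S'}$ and sieving over $b$ still leaves you needing equidistribution to large moduli $a$. So the plan, as stated, does not close.

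The paper bypasses this barrier entirely by not attempting to estimate the full sum $\sum_{p\le X} h(p-1)$. Instead, define $f\colon \mathcal{P}\to\{0,1\}$ with $f(p)=1$ for $p\in S\cup\{2\}$ and $f(p)=0$ otherwise; since $S$ is thick, $f$ is thick. Theorem~\ref{thm:Mirsky-var} (which uses only Siegel--Walfisz, not Bombieri--Vinogradov) gives a set $U_f(x)$ of primes $p\le x$ with $p-1$ $f$-bounded, of positive density $\lambda_f>0$. For such $p$ one has $\rad_S(p-1)\ge (p-1)/2$, so
\[
\log\zeta^W_{G_S}(2) \;\ge\; \sum_{p\in U_f(x)} \rad_S(p-1)\,p^{-2} \;\gg\; \sum_{p\in U_f(x)} p^{-1} \;=\; \lambda_f \log\log x + O(1),
\]
which diverges. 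The missing idea is thus: drop to a positive-density subset of primes on which $h(p-1)\asymp p$, rather than trying to average $h(p-1)$ over all primes.
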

Here a set $S$ is primes is \emph{thin}, if $\alpha_D(S) < 1$. The complement of a thin set is called \emph{thick}.

\medskip
We note that Corollary \ref{cor:conjVal} is a mere existence result. The following problem remains open.
\begin{prob}
Give an explicit description of a procyclic group $G$ with $1 < \alpha(G) <2$.
\end{prob}

The preliminary observation, that served as a starting point for this investigation, will be discussed in Section~\ref{sec:prelim}. Section~\ref{sec:mirsky} elaborates on a result of Mirsky and contains the proof of Theorem~\ref{thm:main-2}. The last section contains the proof of our main theorem.

\section{Preliminaries}\label{sec:prelim}
Let $S \subseteq \mathcal{P}$ be a set of prime numbers. Throughout $S' = \mathcal{P} \setminus S$ denotes the complement of $S$ in the set of all prime numbers. A natural number is an $S$-number (resp. $S'$-number), if all of its prime factors lie in $S$ (resp.\ in $S'$). We write $\langle S \rangle$ to denote the set of all $S$-numbers. For $n \in \bbN$, we denote the largest $S$-number dividing $n$ by $n_S$.  In particular, we have $n = n_S n_{S'}$. We write $\rad(n)$ to denote the radical of $n$ and we put $\rad_S(n) := \rad(n)_S$.
The Dedekind zeta function of the ring $\bbZ_{S'} = \mathbb{Z}[{S'}^{-1}]$ is the partial Riemann zeta function
\[
	\zeta_{S}(s) = \sum_{m \in \langle S \rangle} m^{-s} = \prod_{p \in S} (1-p^{-s})^{-1}.
\]
Recall that the abscissa of convergence of $\zeta_S$ is $\alpha_D(S)$; clearly $ \alpha_D(S) \leq 1$.

\begin{lemma}\label{lem:basic-lemma}
Let $H_S = \widehat{\bbZ_{S'}} = \prod_{p \in S} \bbZ_p$ and let $G_S = \prod_{p\in S} C_p$.
If $S \neq \emptyset$, then the Weil abscissae satisfy
\[
	\frac{1}{L}(\alpha_D(S)+1) \leq \alpha(G_S) \leq \alpha(H_S) \leq \alpha_D(S)+1
\]
where $L$ denotes a Linnik constant.
\end{lemma}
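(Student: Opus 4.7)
The inequality $\alpha(G_S)\le \alpha(H_S)$ is formal: $G_S$ is a quotient of $H_S$, so every character of $G_S$ into $\bbF_{q^j}^\times$ lifts to one of $H_S$. Concretely, $|\Hom(H_S,\bbF_{q^j}^{\times})|=(q^j-1)_S$ (a character of $\bbZ_p$ into a cyclic group of order $n$ lands in the $p$-part, of size $n_p$), while $|\Hom(G_S,\bbF_{q^j}^{\times})|=\rad_S(q^j-1)$; the former dominates the latter coefficient-wise in the logarithmic Dirichlet series.

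For the upper bound $\alpha(H_S)\le \alpha_D(S)+1$, the plan is to use the totient identity restricted to $S$-divisors,
$(q^j-1)_S=\sum_{d\in\langle S\rangle,\, d\mid q^j-1}\phi(d)$, and to swap the order of summation. For the $j=1$ slice the inner sum $\sum_{q\equiv 1\pmod d}q^{-s}$ is bounded trivially by $d^{-s}\zeta(s)$ (since every such prime satisfies $q\ge d+1$), and the outer sum evaluates, by an Euler product computation, to $\sum_{d\in\langle S\rangle}\phi(d)d^{-s}=\zeta_S(s-1)/\zeta_S(s)$, which has abscissa exactly $\alpha_D(S)+1$. For $j\ge 2$ I would exploit the multiplicativity of the $S$-part to write $(q^j-1)_S=(q-1)_S\cdot(1+q+\cdots+q^{j-1})_S$ and bound $(1+q+\cdots+q^{j-1})_S\le 2q^{j-1}$; summing the resulting geometric series in $j$ reduces the tail to $\sum_q (q-1)_S q^{1-2s}$, a shifted copy of the $j=1$ series with abscissa $1+\alpha_D(S)/2$, which is $\le \alpha_D(S)+1$ since $\alpha_D(S)\ge 0$.

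For the lower bound $\alpha(G_S)\ge (\alpha_D(S)+1)/L$, positivity of the log-series lets me restrict to the $j=1$ contribution. Using the squarefree analogue $\rad_S(n)=\sum_{d\in\langle S\rangle\text{ squarefree},\,d\mid n}\phi(d)$ and the same swap of sums, Linnik's theorem provides a prime $q\equiv 1\pmod d$ with $q\le Cd^L$, so $\sum_{q\equiv 1\pmod d}q^{-s}\ge C^{-s}d^{-sL}$. The assembled lower bound is the Euler product $\prod_{p\in S}(1+(p-1)p^{-sL})$, which diverges iff $\sum_{p\in S}p^{1-sL}$ does, namely for $s\le (\alpha_D(S)+1)/L$.

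The only genuinely delicate step is the $j\ge 2$ estimate in the upper bound: the naive bound $(q^j-1)_S\le q^j$ only gives convergence for $s>3/2$ and is insufficient when $\alpha_D(S)<1/2$. Peeling off the $(q-1)_S$ factor first is the key trick, as it effectively recycles the $j=1$ analysis at a shifted value of $s$ and avoids any separate estimate on orders modulo composite $d$.
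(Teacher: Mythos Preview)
Your argument is correct, but both halves differ from the paper's route.

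For the upper bound, the paper avoids any $j=1$ versus $j\ge 2$ split. It rewrites $\log\zeta^W_{H_S}(s)=\sum_{n\ge 2}(n-1)_S\,\Lambda'(n)\,n^{-s}$ with $\Lambda'(n)=\Lambda(n)/\log n\le 1$, drops the weight, shifts $n\mapsto m=n-1$, and factors $m=k\ell$ with $k\in\langle S\rangle$, $\ell\in\langle S'\rangle$ to obtain the single product $\zeta_S(s-1)\zeta_{S'}(s)$. This one-line trick replaces your totient swap and your separate treatment of the $j\ge 2$ tail. Your approach is more hands-on but equally valid; the identity $(q^j-1)_S=(q-1)_S\cdot(1+q+\cdots+q^{j-1})_S$ is legitimate because $n\mapsto n_S$ is completely multiplicative, and your reduction of the tail to the $j=1$ series at $2s-1$ works since $\alpha_D(S)\ge 0$ forces $1+\alpha_D(S)/2\le \alpha_D(S)+1$.

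For the lower bound, the paper also restricts to squarefree $m\in\langle S\rangle$ and invokes Linnik, but instead of swapping sums it assigns to each $m$ its least prime $p(m)\equiv 1\pmod m$, notes $\rad_S(p(m)-1)\ge m$, and controls the overcounting (each prime $p$ arises as $p(m)$ for at most $d(p-1)$ moduli) via the divisor bound $d(p-1)=O_\epsilon(p^\epsilon)$. Your swap-first approach is cleaner here: the totient identity $\rad_S(n)=\sum_{d\mid n,\ d\in\langle S\rangle\ \mathrm{sqf}}\phi(d)$ lets you bound each inner sum $\sum_{q\equiv 1\,(d)}q^{-s}$ from below by a single Linnik term, and the resulting Euler product $\prod_{p\in S}(1+(p-1)p^{-sL})$ diverges exactly when $\prod_{p\in S}(1+p^{1-sL})=\zeta_S(sL-1)/\zeta_S(2(sL-1))$ does, i.e.\ for $sL-1<\alpha_D(S)$. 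This sidesteps the divisor bound entirely. One small imprecision: the divergence is for $s<(\alpha_D(S)+1)/L$ strictly, not $\le$; but this is exactly what is needed for the abscissa inequality.
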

\begin{proof}
As $G_S$ is a quotient of $H_S$ the inequality $\alpha(G_S)\leq \alpha(H_S)$ is clear (cf.~\cite[Lemma 4.1]{CCKV24}).

Consider the upper bound first. We observe that $|\Hom(H_S,\bbF_{p^j})| = (p^j-1)_S$. Let $\Lambda$ denote the von Mangoldt function.
We define $\Lambda'(n) =  \frac{\Lambda(n)}{\log(n)}$ for $n \geq 2$ and we put
$\Lambda'(1) = 0$. We observe that $\Lambda'(n) \leq 1$ for all~$n$. For all real $s > \alpha_D(S)+1$ we have
\begin{align*}
\log \zeta^{\text{W}}_{G_S}(s) &= \sum_{n \geq 2} (n-1)_S \ \Lambda'(n)n^{-s} \leq \sum_{n\geq 2} (n-1)_S \ n^{-s} \\
 &\leq \sum_{m\geq 1} m_S \ m^{-s} 
= \sum_{k \in \langle S\rangle, \ell \in \langle S' \rangle} k (k\ell)^{-s}\\
&= \zeta_{S}(s-1) \zeta_{S'}(s).
\end{align*} 
Since $S \neq \emptyset$, we have $\alpha_D(S) \geq 0$ and hence 
$s > \alpha_D(S')+1 \geq 1$; in particular, the series converges.

For the lower bound we observe that $|\Hom(G_S,\bbF_{p^j})| = \rad_S(p^j-1)$. 
Define $\mu_2(m) = 1$ if $m$ is square-free and $\mu_2(m) =0$ otherwise. 
The Dirichlet series 
\[
	\sum_{m \in \bbN_{S}} \mu_2(m) m^{-s} = \prod_{p \in S} (1+p^{-s}) = \frac{\zeta_{S}(s)}{\zeta_{S}(2s)}
\]
has the same absicssa of convergence as $\zeta_{S}$.
Since $L$ is a Linnik constant, there is some $C >0 $ such that
for all $m \in \langle S \rangle$, we can find a prime number $p(m)$ with $p(m) \equiv 1 \bmod m$ and $p(m) \leq Cm^L$.
Every prime $p$ equals $p(m)$ for at most $d(p-1)$ different numbers $m$, where $d$ denotes the divisor function.
Let $\epsilon > 0$ be given.
The well-known divisor bound asserts that $d(p-1) = O_\epsilon(p^\epsilon)$ and this
allows to conclude
\begin{align*}
	\log(\zeta^{\text{W}}_{G_S})(s) &\geq \sum_{p} \rad_S(p-1) p^{-s}\\
	 &\geq \sum_{m \in \langle S\rangle} \mu_2(m) 
	\rad_S(p(m)-1)d(p(m))^{-1} p(m)^{-s}\\ 
	&\gg  \sum_{m \in \langle S\rangle} \mu_2(m) m  p(m)^{-s-\epsilon} \\
	&\gg \sum_{m \in \langle S\rangle} \mu_2(m) m^{1-L(s+\epsilon)}.
\end{align*}
This Dirichlet series diverges for $L(s+\epsilon) < \alpha_D(\bbZ_{S'})+1$. Since $\epsilon$ is arbitrary this implies the assertion.
\end{proof}

\section{A variation of a result of Mirsky}\label{sec:mirsky}
Mirsky \cite{Mirsky} proved that the set of prime numbers $p$ such that $p-1$ is $k$-free has a positive natural density
$\prod_{p} \bigl(1- \frac{1}{(p-1)p^{k-1}}\bigr)$.
Here we need a slight variation of his result.
\begin{definition}
Let $f \colon \mathcal{P}\to \mathbb{N}_0$ be a function. We say that a natural number $n$ is \emph{$f$-bounded}, if
\[
	\nu_p(n) \leq f(n)
\]
for all primes $p$. Here $\nu_p(n) = \max \{j\in\bbN_0 \mid \ p^j | n\}$.
\end{definition}
If $f$ is the constant function with value $k-1$, then $f$-bounded numbers are exactly the $k$-free numbers.
\begin{definition}
Recall that a set $S$ is primes is \emph{thin}, if $\alpha_D(S) < 1$.
A set of primes is \emph{thick}, if its complement is thin.
A function $f\colon \mathcal{P} \to \bbN_0$ is \emph{thick}, if its support is thick; i.e., $f^{-1}(0)$ is thin.
\end{definition}
Following closely along the lines of Mirsky's approach, one obtains:
\begin{thm}\label{thm:Mirsky-var}
Let $f\colon \mathcal{P} \to \mathbb{N}_0$ be thick and let $A > 0$ be arbitrary.
Let $U_f(x)$ denote the set of primes $p \leq x$ such that $p-1$ is $f$-bounded.
Then
\[
	|U_f(x)| = \prod_{p} \left(1- \frac{1}{(p-1)p^{f(p)}}\right) \mathrm{Li}(x) + O\Bigl(\frac{x}{\log^A(x)}\Bigr)
\]
as $x$ tends to infinity.
\end{thm}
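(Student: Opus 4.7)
The plan is to follow Mirsky's inclusion--exclusion argument, allowing the constant exponent to be replaced by the function $f$. Writing $g(q) := f(q)+1$ and, for a squarefree integer $d$, $d^* := \prod_{q \mid d} q^{g(q)}$, the indicator of being $f$-bounded factors as
\[
\mathbf{1}_{\{n \text{ is } f\text{-bounded}\}} \;=\; \prod_q \bigl(1 - \mathbf{1}_{q^{g(q)} \mid n}\bigr) \;=\; \sum_{d \text{ squarefree}} \mu(d)\,\mathbf{1}_{d^* \mid n}.
\]
Summing over primes $p \leq x$ in the role of $n = p-1$ yields
\[
|U_f(x)| \;=\; \sum_{d \text{ squarefree}} \mu(d)\,\pi(x; d^*, 1),
\]
where $\pi(x; m, 1)$ is the number of primes $p \leq x$ with $p \equiv 1 \pmod m$.

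Fix a threshold $y := (\log x)^M$ with $M = M(A)$ to be chosen large, and split the $d$-sum at $d^* = y$. For $d^* \leq y$, the Siegel--Walfisz theorem gives
\[
\pi(x; d^*, 1) \;=\; \frac{\Li(x)}{\varphi(d^*)} \;+\; O\!\left(\frac{x}{(\log x)^B}\right),
\]
uniformly in $d^* \leq y$, with $B$ as large as desired. Summing these contributions and completing the Dirichlet series yields the proposed main term
\[
\Li(x)\prod_q \Bigl(1 - \tfrac{1}{\varphi(q^{g(q)})}\Bigr) \;=\; \Li(x)\prod_q \Bigl(1 - \tfrac{1}{(q-1)\,q^{f(q)}}\Bigr).
\]
The thickness of $f$ enters precisely here: since $f^{-1}(0)$ is thin, one finds $\sigma > 0$ for which $\sum_{d \text{ squarefree}} (d^*)^{-1+\sigma}$ converges, since the Euler factors with $g(q) \geq 2$ are dominated by $1 + q^{-2(1-\sigma)}$, while the factors with $g(q)=1$ reduce to $\sum q^{\sigma-1}$ over the thin set $f^{-1}(0)$, which converges for $\sigma < 1-\alpha_D(f^{-1}(0))$. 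This produces the tail bound $\sum_{d^* > y} 1/\varphi(d^*) \ll y^{-\sigma}$, and the Siegel--Walfisz errors accumulate to $O(y \cdot x/(\log x)^B) = O(x/(\log x)^{B-M})$; both are $\ll x/(\log x)^A$ once $M$ and $B$ are chosen large.

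The main difficulty is the contribution of $d^* > y$. For $y < d^* \leq x^{1/2}$ the Brun--Titchmarsh inequality gives $\pi(x; d^*, 1) \ll x/(\varphi(d^*)\log x)$, and combined with the tail bound above this contributes $\ll (x/\log x)\,y^{-\sigma}$, again acceptable. In the range $x^{1/2} < d^* < x$ Brun--Titchmarsh degenerates, and $\pi(x;d^*,1)$ vanishes once $d^* \geq x$; here I would use the trivial bound $\pi(x;d^*,1) \leq x/d^* + 1$ together with a second consequence of thickness. Choosing $s_0 \in (\max(1/2,\alpha_D(f^{-1}(0))),1)$, the Euler product $\prod_q(1 + q^{-s_0 g(q)}) = \sum_d (d^*)^{-s_0}$ converges, whence the number of squarefree $d$ with $d^* \leq x$ is $O(x^{s_0})$ and a similar moment argument bounds $\sum_{d^* > x^{1/2}} 1/d^*$ by $O(x^{(s_0-1)/2})$. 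The contribution from this range is therefore at most $O(x^{(1+s_0)/2}) = o(x/(\log x)^A)$. Assembling the three pieces delivers the stated asymptotic.
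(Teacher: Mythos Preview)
Your proof is correct and follows essentially the same strategy as the paper: M\"obius inclusion--exclusion, Siegel--Walfisz for small moduli, and elementary bounds for large moduli, with thickness guaranteeing that the relevant tail sums decay. The paper's treatment is slightly leaner: it splits on the squarefree kernel $a$ rather than on $d^*$, and for the entire large range uses only the trivial bound $\pi(x;m,1)\le x/m$ (so neither Brun--Titchmarsh nor the separate counting of $d$ with $x^{1/2}<d^*<x$ is needed), handling the resulting tail by the explicit factorisation $a=a_S a_{S'}$ with $S=f^{-1}(0)$ in place of your Rankin-type argument.
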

The thickness of $f$ implies that $\lambda_f = \prod_{p} \left(1- \frac{1}{(p-1)p^{f(p)}}\right)$ converges. For $f(2) > 0$ the limit is nonzero. Since Mirsky omitted the proof of his theorem in \cite{Mirsky}, we decided to include a proof.
\begin{proof}
Let $p$ be a prime, we write $p^{(f)}:= p^{f(p)}$ and extend this multiplicatively to all natural numbers, i.e. for $n = \prod_{i=1}^r p_i^{e_i}$ we have $n^{(f)} = \prod_{i=1}^r p_i^{f(p_i)e_i}$.
For convenience we write $n^{(f)+1}:= n\cdot n^{(f)}$ and we write $a \mid_f n$ if $a^{(f)+1} \mid n$.

The M\"obius function $\mu$ gives rise to
\[
	\mu_f(n) := \sum_{a \mid_f n} \mu(a) = \begin{cases} 1 & \text{ if  $a$ is $f$-bounded }\\
	0 & \text{ otherwise }\end{cases}.
\]
This gives
\[
	|U_f(x)| = \sum_{p \in U_f(x)} 1 = \sum_{p \leq x} \mu_f(p-1) =\sum_{p \leq x} \sum_{a \mid_f p-1} \mu(a) = \sum_{a \leq x} \mu(a) \pi(x,a^{(f)+1},1)
\]
where $\pi(x,m,b)$ denotes the number of primes $p \leq x$ with $p \equiv b \bmod m$.

Let $B > 1$ be arbitrary. Observing that $\lambda_f = \sum_{a} \frac{\mu(a)}{\phi(a^{(f)+1})}$ (where $\phi$ is Euler's totient function) we may write $|U_f(x)| - \lambda_f \Li(x)$ as a sum
\begin{align}
 \label{first:term}\sum_{a \leq \log^B(x)} &\mu(a)\Bigl(\pi(x,a^{(f)+1},1) - \frac{\Li(x)}{\phi(a^{(f)+1})}\Bigr)\\
  \label{second:term}  &+\sum_{\log^B(x) < a \leq x} \mu(a) \pi(x,a^{(f)+1},1)\\ 
 \label{third:term} &-\sum_{a >\log^B(x)} \mu(a)\frac{\Li(x)}{\phi(a^{(f)+1})} 
\end{align}
The summand \eqref{first:term} is $O\bigl(\frac{x}{\log^{A}(x)}\bigr)$ by the Theorem of Siegel-Walfisz \cite[Cor.~5.29]{IwaniecKowalski}. To approach the other two terms we use the assumption that $f$ is thick. Let $S = f^{-1}(0)$ be the thin zero set of $f$. By assumption $\alpha_D(S) < 1$, so there is $c> 0$ with $c+\alpha_D(S) < 1$; this entails
 \begin{equation}
	\sum_{\substack{a \in \langle S \rangle\\ a > y}} \frac{1}{a} \leq y^{-c} \sum_{\substack{a \in \langle S \rangle\\ a > y}} \frac{1}{a^{1-c}} \ll y^{-c}
\end{equation}
as $y$ tends to infinity.

Now consider the summand \eqref{second:term}. The rough estimate $\pi(x,n,1) \leq \frac{x}{n}$ suffices to deduce
\begin{align*}
 \Bigl| \sum_{\log^B(x) < a \leq x} \mu(a) \pi(x,a^{(f)+1},1) \Bigr| &\leq \sum_{\log^B(x) < a } \frac{x}{a^{(f)+1}}\\
  &\leq x \Bigl(\sum_{\substack{a \in \langle S \rangle\\ a > \log^{B/2}(x)}} \frac{1}{a}\sum_{b \in \langle S' \rangle} \frac{1}{b^2} + \sum_{a \in \langle S \rangle} \frac{1}{a} \sum_{\substack{b \in \langle S' \rangle\\ b > \log^{B/2}(x)}} \frac{1}{b^2}\Bigr)\\
  &\ll x \Bigl(\frac{1}{\log^{cB/2}(x)} + \frac{1}{\log^{B/2}(x)}\Bigr).
\end{align*}
Recall that $B$ was arbitrary; this proves that \eqref{second:term} is $O\bigl(\frac{x}{\log^{A}(x)}\bigr)$.
The last summand \eqref{third:term} can be bounded analogously using that $n^{1-\epsilon} \ll \phi(n)$ for all $\epsilon > 0$
(see \cite[Thm.~327]{HardyWright}).  
\end{proof}

With this preparation we are able to prove Theorem~\ref{thm:main-2}
\begin{proof}[Proof of Theorem~\ref{thm:main-2}]
It is known that $\alpha(G_S) \leq \alpha(H_S) \leq \alpha(\widehat{\bbZ}) = 2$ by Lemma \ref{lem:basic-lemma} and \cite[\S 5.1]{CCKV24}. 
It suffices to prove $\alpha(G_S) = 2$. We define a function $f \colon \mathcal{P} \to \bbN_0$ with support $S \cup \{2\}$. For odd primes $p$ we define $f(p) = 1$ iff $p\in S$ and we impose $f(2) = 1$ to ensure that $\lambda_f \neq 0$. It follows from Theorem \ref{thm:Mirsky-var} via partial summation that
\[
	\sum_{p\in U_f(x)} \frac{1}{p} = \lambda_f \log\log(x) + O(1).
\]
For $p\in U_f(x)$ we have $\rad_S(p-1) = \begin{cases} p-1 & \text{ if } 2 \in S\\
\frac{p-1}{2} & \text{ if } 2 \not\in S \end{cases}.$ This allows us to deduce
\begin{align*}
  \sum_{p^{j} \leq x} \frac{\rad_S(p^j-1)}{j}p^{-2j} &\geq \sum_{p \in U_f(x)} \frac{(p-1)}{2} p^{-2}\\
   &\gg \sum_{p \in U_f(x)} p^{-1}  = \lambda_f \log\log(x) + O(1).
  \end{align*} 
 We conclude that the Weil representation zeta function $\zeta^W_{G_S}(s)$ diverges at $s=2$.
\end{proof}

\section{Random procyclic groups}
Let \( \delta \in (0,1) \) be given. Recall that a set $S$ of prime numbers is \emph{random w.r.t. the $\delta$-model}, if the events $p \in S$ and $p' \in S$ are independent for $p \neq p'$ and occur with probability 
\[ \bfP(p\in S) = \frac{p^{1-\delta}-1}{p-1}. \]
More precisely, we encode sets $S$ of primes as characteristic functions; this leads to the probability space \(\Omega = \prod_{p \in \mathcal{P}} \{0,1\}\) 
with the product probability measure $\prod_{p} \mu_{p,\delta}$ where $\mu_{p,\delta}(1) = \frac{p^{1-\delta}-1}{p-1}$.
From now on we consider the procyclic groups $G_S$ and $H_S$ for $S$ chosen randomly in the $\delta$-model.

\begin{lemma}\label{lem:limsup}
Let $(X_t)_{t \in \bbR}$ be a one-parameter family of non-negative random variables with $0 < \bfE(X_t) < \infty$ .
Assume that 
\[
	\bfVar(X_t) = O(\bfE(X_t)^2) \quad \text{ as } t \to \infty.
\]
Then $\bfP(\limsup_{t \to \infty} \frac{X_t}{\bfE(X_t)} = 0 ) < 1$. 
\end{lemma}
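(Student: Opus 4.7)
The plan is a Paley--Zygmund (second moment) argument combined with a reverse Fatou step. Recall that for any non-negative random variable $X$ with $0 < \bfE(X) < \infty$ and any $\theta \in (0,1)$,
\[
\bfP(X \geq \theta \bfE(X)) \;\geq\; (1-\theta)^2 \frac{\bfE(X)^2}{\bfE(X^2)} \;=\; \frac{(1-\theta)^2}{1 + \bfVar(X)/\bfE(X)^2}.
\]
Applied to $X = X_t$, the hypothesis $\bfVar(X_t) = O(\bfE(X_t)^2)$ supplies a constant $C$ with $\bfVar(X_t)/\bfE(X_t)^2 \leq C$ for all sufficiently large $t$. Fixing, say, $\theta = 1/2$, this yields a constant $c := \tfrac{1}{4(1+C)} > 0$ and a threshold $T$ such that $\bfP(X_t \geq \tfrac{1}{2}\bfE(X_t)) \geq c$ for all $t \geq T$.

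Next I would choose any sequence $t_n \to \infty$ and put $A_n := \{X_{t_n} \geq \tfrac{1}{2}\bfE(X_{t_n})\}$, so that $\bfP(A_n) \geq c$ for all large $n$. Since the indicators $\mathbf{1}_{A_n}$ are uniformly bounded by $1$, the reverse Fatou lemma gives
\[
\bfP\bigl(\limsup_{n \to \infty} A_n\bigr) \;\geq\; \limsup_{n \to \infty} \bfP(A_n) \;\geq\; c \;>\; 0.
\]
On the event $\limsup_n A_n$, we have $X_{t_n}/\bfE(X_{t_n}) \geq 1/2$ for infinitely many $n$, whence $\limsup_{t \to \infty} X_t/\bfE(X_t) \geq 1/2 > 0$. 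Therefore
\[
\bfP\bigl(\limsup_{t \to \infty} X_t/\bfE(X_t) = 0\bigr) \;\leq\; 1 - c \;<\; 1,
\]
as required.

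No step is really an obstacle; the only thing to keep in mind is that we need the \emph{quantitative} Paley--Zygmund bound (i.e.\ a uniform positive lower bound on $\bfP(A_n)$) rather than just nonvanishing, and that reverse Fatou converts such a uniform lower bound into a positive probability for the limsup event without any independence assumption on the $X_t$.
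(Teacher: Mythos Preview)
Your proof is correct and follows essentially the same approach as the paper: both establish a uniform lower bound on $\bfP(X_t \geq \tfrac{1}{2}\bfE(X_t))$ via a second-moment inequality and then convert this into the $\limsup$ conclusion. The only cosmetic differences are that the paper uses the one-sided Cantelli--Chebyshev inequality in place of Paley--Zygmund, and reaches the conclusion by a contradiction argument (continuity of measure on the increasing events $A_n=\{X_t<\tfrac12\bfE(X_t)\ \forall t\ge n\}$) rather than your direct application of reverse Fatou.
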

\begin{proof}
Write $\sigma^2_t = \bfVar(X_t)$.
Assume that $\sigma^2_t \leq \lambda \bfE(X_t)^2$ for $\lambda > 0$ and all $t \geq n_0$.
The Cantelli-Chebyshev inequality allows us to deduce for all $t \geq n_0$
\begin{align*}
	\bfP\Bigl( X_t < \frac{1}{2}\bfE(X_t)\Bigr) &= \bfP\Bigl( X_t - \bfE(X_t) < -\frac{1}{2}\bfE(X_t)\Bigr)  \leq \frac{\sigma^2_t}{\sigma^2_t+ \frac{1}{4}\bfE(X_t)^2}\\
	&\leq \frac{\sigma^2_t}{\sigma^2_t + \frac{1}{4\lambda}\sigma^2_t} = \frac{4\lambda}{4\lambda+1}.
\end{align*}
Assume for a contradiction that $\bfP(\limsup_{t \to \infty} \frac{X_t}{\bfE(X_t)} = 0 ) =1$.
For almost all $\omega \in \Omega$, there is $t_\omega \in \bbR_{>0}$ such that $X_t(\omega) < \frac{1}{2}\bfE(X_t)$ for all $t \geq t_\omega$.  
In particular, the union of the events $A_n = \bigl\{ \omega \in \Omega \mid  X_t(\omega) < \frac{1}{2}\bfE(X_t) \text{ for all } t \geq n\bigr\}$ occurs with probability~$1$. This implies that 
there is $n \geq n_0$ with 
\[ \frac{4\lambda}{4\lambda+1} \geq \bfP\Bigl(X_n < \frac{1}{2}\bfE(X_n)\Bigr) \geq \bfP(A_n) > \frac{4\lambda}{4\lambda+1}\]
and this gives the desired contradiction.
\end{proof}

The following number theoretic result is the key ingredient in our main theorem; it is a consequence of the Bombieri-Vinogradov theorem.
\begin{lemma}\label{lem:bombieri-vinogradov}
Let $\delta\in (0,1)$ and let $A > 2$. Then 
\[
	\sum_{a \leq t} a^\delta \pi(t,a,1)^2  = b_\delta \Li(t)^2 + O\Bigl(\frac{t^2}{\log^{A}(t)}\Bigr)
\]
 as $t \to \infty$ where $b_\delta = \sum_{a=1}^\infty \frac{a^\delta}{\phi(a)^2}$. The implied constants depend on $A$ and $\delta$.
 \end{lemma}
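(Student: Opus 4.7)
The plan is to apply the Bombieri--Vinogradov theorem in the standard way, exploiting that $\delta < 1$ keeps the weight $a^\delta/\phi(a)$ uniformly bounded on $a \geq 1$ (via the classical estimate $\phi(a) \gg a^{1-\epsilon}$ from \cite[Thm.~327]{HardyWright}). First I would write $\pi(t,a,1) = \Li(t)/\phi(a) + E(t,a)$ and expand
\[
 \pi(t,a,1)^2 = \frac{\Li(t)^2}{\phi(a)^2} + \frac{2\Li(t)}{\phi(a)} E(t,a) + E(t,a)^2,
\]
so that the target sum splits into a main piece plus two error pieces. The main piece is $\Li(t)^2 \sum_{a \leq t} a^\delta/\phi(a)^2$, and I would evaluate it by extending the sum to all $a \geq 1$: the series defining $b_\delta$ converges (since $a^\delta/\phi(a)^2 \ll a^{\delta-2+\epsilon}$ and $\delta < 1$), and the tail $\sum_{a > t}$ contributes $O(t^{\delta-1+\epsilon})$, yielding total main-term error $O(t^{1+\delta+\epsilon}/\log^2(t))$, which is $o(t^2/\log^A(t))$.

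For the cross and square error pieces, I would fix $Q = \sqrt{t}/\log^{B}(t)$ with $B = B(A)$ large enough that the Bombieri--Vinogradov theorem yields
\[
 \sum_{a \leq Q} |E(t,a)| \ll \frac{t}{\log^{A+2}(t)},
\]
and then split each sum at $a = Q$. In the small range $a \leq Q$, the uniform bound $a^\delta/\phi(a) = O(1)$ reduces the cross term to $\Li(t)\sum_{a \leq Q}|E(t,a)| \ll t^2/\log^{A+3}(t)$; for the square error, peeling off one factor with the trivial estimate $|E(t,a)| \ll t/\phi(a)$ gives $t \sum_{a \leq Q}(a^\delta/\phi(a))|E(t,a)| \ll t^2/\log^{A+2}(t)$. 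In the large range $Q < a \leq t$, the elementary bound $\pi(t,a,1) \leq t/a$ gives $a^\delta \pi(t,a,1)^2 \ll a^{\delta-2} t^2$, so the tail sums to $t^2 Q^{\delta-1} \ll t^{(3+\delta)/2}\log^{B(1-\delta)}(t)$; since $\delta < 1$ this beats $t^2/\log^A(t)$ by a power of $t$, and the large-range cross term is handled analogously via $|E(t,a)| \ll t/\phi(a)$.

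The main obstacle is purely bookkeeping: choosing $B$ (and the auxiliary $\epsilon$) so that the three contributions---the main-term tail, the Bombieri--Vinogradov range $a \leq Q$, and the trivial-bound range $Q < a \leq t$---simultaneously drop below $t^2/\log^A(t)$. The slack afforded by $\delta < 1$ makes each individual estimate comfortable, and the argument reduces to the classical Bombieri--Vinogradov application decorated with the benign polynomial weight $a^\delta$.
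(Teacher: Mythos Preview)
Your argument is correct and follows the same overall strategy as the paper: split the range of $a$, use the trivial bound $\pi(t,a,1)\le t/a$ on the tail, and control the small-modulus range via Bombieri--Vinogradov after expanding $\pi(t,a,1)^2$ around $\Li(t)/\phi(a)$. The paper splits at $t^\theta$ for a fixed $\theta<\tfrac12$ rather than at $\sqrt{t}/\log^B t$, but this is cosmetic. The one substantive difference is the square error $\sum_{a\le Q} a^\delta E(t,a)^2$: the paper invokes the weighted $L^2$ form of Bombieri--Vinogradov recorded in \cite{ElliottHalberstam}, namely $\sum_{a\le t^\theta}\phi(a)\,E(t,a)^2=O(t^2/\log^A t)$, and then uses $a^\delta\ll\phi(a)$. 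Your peeling step $a^\delta E(t,a)^2\le (a^\delta/\phi(a))\cdot t\,|E(t,a)|\ll t\,|E(t,a)|$ reduces everything to the standard $L^1$ Bombieri--Vinogradov bound and is therefore a genuine simplification; no separate variance estimate is needed.
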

\begin{proof}
It is well-known that $a^{1-\epsilon} \ll \phi(a)$ for every $\epsilon > 0$ (see \cite[Thm.~327]{HardyWright}), hence the series $\sum_{a}\frac{a^\delta}{\phi(a)^2}$ converges.
Pick $0 < \theta < \frac{1}{2}$. We write
\[
	\sum_{a \leq t} a^\delta \pi(t,a,1)^2 = \underbrace{\sum_{a \leq t^\theta}a^\delta \pi(t,a,1)^2}_{\Sigma_1} +  \underbrace{\sum_{t^\theta< a \leq t}a^\delta \pi(t,a,1)^2}_{\Sigma_2}.
\]
To bound $\Sigma_2$ we use the observation $\pi(t,a,1) \leq \frac{t}{a}$ and deduce
\[
	\Sigma_2 \leq \sum_{t^\theta< a \leq t}a^\delta \frac{t^2}{a^2} = t^2 \sum_{t^\theta< a \leq t} a^{\delta- 2} \ll t^2\int_{t^\theta}^tx^{\delta-2} \mathrm{d}x \ll t^{2+(\delta-1)\theta} \ll \frac{t^2}{\log^{A}(t)},
\]
since $\delta < 1$.

We rewrite $\Sigma_1-b_\delta\Li(t)^2$ as
\begin{equation}\label{eq:decomposeS1}
	\sum_{a \leq t^\theta} a^\delta \Bigl(\pi(t,a,1)-\frac{\Li(t)}{\phi(a)}\Bigr)^2 + 2\Li(t)\sum_{a \leq t^\theta}\frac{a^\delta}{\phi(a)} \Bigl(\pi(t,a,1)-\frac{\Li(t)}{\phi(a)}\Bigr) - \Li(t)^2\sum_{a > t^\theta} \frac{a^\delta}{\phi(a)^2}.
\end{equation}
Using $a^{1-\epsilon} \ll \phi(a)$ for every $\epsilon > 0$, we see that
the last summand in \eqref{eq:decomposeS1} is bounded by
\[
	\Li(t)^2\sum_{a > t^\theta} \frac{a^\delta}{\phi(a)^2}
	 \ll \Li(t)^2 \int_{t^\theta}^\infty x^{-2+\delta+2\epsilon} \mathrm{d}x
	  \ll \frac{t^{1+\delta+2\epsilon}}{\log^{2}(t)}
\]
and for $\delta+2\epsilon < 1$, the last expression is $O\bigl(\frac{t^2}{\log^{A}(t)}\bigr)$.
The other two summands in \eqref{eq:decomposeS1} can be treated using the Bombieri-Vinogradov theorem (see e.g.~\cite[Theorem~9.2.1]{SieveMethods}). It readily implies
\[	
	\sum_{a \leq t^\theta}  \Bigl|\pi(t,a,1) - \frac{\Li(t)}{\phi(a)}\Bigr|  \ll \frac{t}{\log^{A}(t)}.
\]
and this allows us to deduce (using $a^{\delta} \ll \phi(a)$)
\[	
	\sum_{a \leq t^\theta}  \frac{a^\delta}{\phi(a)}\Bigl|\pi(t,a,1) - \frac{\Li(t)}{\phi(a)}\Bigr|  \ll \frac{t}{\log^{A}(t)}.
\]
The theorem of Bombieri-Vinogradov as stated in \cite{ElliottHalberstam} implies
\begin{equation*}
	\sum_{a \leq t^\theta} \phi(a) \Bigl(\pi(t,a,1)-\frac{\Li(t)}{\phi(a)}\Bigr)^2 = O\Bigl(\frac{t^2}{\log^{A}(t)}\Bigr).
\end{equation*}
As a consequence (using $a^{\delta} \ll \phi(a)$) this implies
\begin{align*}
	\sum_{a \leq t^\theta} a^\delta \Bigl(\pi(t,a,1)-\frac{\Li(t)}{\phi(a)}\Bigr)^2
	&\ll\sum_{a \leq t^\theta} \phi(a) \Bigl(\pi(t,a,1)-\frac{\Li(t)}{\phi(a)}\Bigr)^2\\ 
	&\ll \frac{t^2}{\log^{A}(t)}
\end{align*}
as $t \to \infty$.
This completes the proof.
\end{proof}

\begin{proof}[Proof of Theorem \ref{thm:main-1}]
For each $n \in \bbN$ we define the random variable
\[
	I_n = \begin{cases} 1 & \text{ if } n \in \langle S \rangle \\
	0 & \text{ otherwise } \end{cases}.
\]
The expectation of $I_n$ is the probability that $n$ is an $S$-number, i.e. that all prime factors of $n$ belong to $S$:
\[
	\bfE(I_n) = \prod_{p \mid n}\frac{p^{1-\delta}-1}{p-1} = \rad(n)^{-\delta} \prod_{p\mid n} \frac{p-p^{\delta}}{p-1} \leq \rad(n)^{-\delta} .
\]
For $s \in \bbR_{>0}$ we define a random variable
\[
	Y(s) = \sum_{m\in \bbN} I_m m^{-s}.
\]
Since the random variables $I_m$ are positive, the monotone convergence theorem implies that
\[
	\bfE(Y(s)) = \sum_{m \in \bbN} \bfE(I_m) m^{-s} \leq \sum_{m \in \bbN} \rad(m)^{-\delta} m^{-s} 
	=\prod_{p} \bigl(1+\frac{p^{-\delta-s}}{1-p^{-s}}\bigr).
\]
The Dirichlet series on the right hand side converges absolutely for all $s > 1-\delta$.
If the expectation of a random variable is finite, the almost surely the random variable is finite. With Lemma \ref{lem:basic-lemma} this gives
$\alpha(G_S) \leq \alpha(H_S) \leq 2- \delta$ almost surely.

\medskip

For the lower bound, recall that $\log\zeta^W_{G_S}(s) = \sum_{p\in \mathcal{P}}\sum_{j=1}^\infty \rad_S(p^j-1)p^{-sj}$. We consider the partial sums of this series for $s= 1-\delta$ as a random variable in $S$. The $S$-radical of a number $n$ gives rise to the random variable $R_n = \prod_{\substack{p \mid n\\p \in S}} p = \prod_{p \mid n} \bigl((p-1)I_p+1\bigr)$.
As the $I_p$ are independent, we have 
\[ \bfE(R_n) = \prod_{p \mid n} \Bigl((p-1)\frac{p^{1-\delta}-1}{p-1}+1\Bigr) = \rad(n)^{1-\delta}. \]
For every $t \in \bbR_{>0}$ we consider the random variable
\[
	X_t = \sum_{p \leq t} R_{p-1} p^{\delta-1}.
\]
We are going to verify the conditions of Lemma \ref{lem:limsup} for the family $X_t$.

As a first step we show that the expectation of $X_t$ is of the order $\Li(t)$, i.e. $\bfE(X_t) \asymp \Li(t)$.
For the upper bound, we use the prime number theorem. For every $A > 1$ we have
\[
	\bfE(X_t) = \sum_{p \leq t} \rad(p-1)^{1-\delta} p^{\delta-1} \leq \sum_{p \leq t} 1 = \Li(t) + O\Bigl(\frac{t}{\log^{A}(t)}\Bigr)
\]
The lower bound is a consequence of Mirsky's Theorem; i.e., we count primes $p \leq t$ such that $p-1$ is squarefree (apply Theorem \ref{thm:Mirsky-var} with $f$ the constant $1$-function). This gives
\begin{align*}
	\bfE(X_t) &= \sum_{p \leq t} \rad(p-1)^{1-\delta} p^{\delta-1} \geq  \sum_{\substack{p \leq t\\p-1 \text{ sqf.}}} \left(\frac{p-1}{p}\right)^{1-\delta}\\
	&\geq 2^{-1+\delta} \sum_{\substack{p \leq t\\p-1 \text{ sqf.}}} 1 = 2^{-1+\delta}U_f(t) = c_\delta\Li(x) + O\Bigl(\frac{t}{\log^{A}(t)}\Bigr)
\end{align*}
for some $c_\delta > 0$.

We proceed to estimate the variance of $X_t$. As $X_t$ is bounded, we have
	$\bfVar(X_t) = \bfE(X_t^2)-\bfE(X_t)^2$
and to apply Lemma \ref{lem:limsup} it suffices to show that $\bfE(X_t^2) = O(\bfE(X_t)^2) = O(\Li(t)^2)$ as $t \to \infty$.

Let $n,m \in \bbN$ be integers. The expectation of $R_nR_m$ is
\[
	\prod_{\substack{p \mid n\\p\nmid m}}p^{1-\delta}\prod_{\substack{p \mid m\\p\nmid n}} p^{1-\delta}
	\prod_{p \mid (n,m)}\bigl(p^{2-\delta}(1-p^{\delta-1}+p^{-1})\bigr) \leq 2\rad(n)^{1-\delta}\rad(m)^{1-\delta}\rad(n,m)^\delta.
\]
Here $(n,m)$ denotes the gcd of $n$ and $m$ and $\rad(n,m)$ is the radical of $(n,m)$. 
This gives, using Lemma~\ref{lem:bombieri-vinogradov} in the last step,
\begin{align*}
	\bfE(X_t^2) &= \sum_{p_1,p_2\leq t} \bfE(R_{p_1-1}R_{p_2-1}) (p_1p_2)^{\delta-1}\\
	&\leq 2\sum_{p_1,p_2\leq t} \rad(p_1-1)^{1-\delta}\rad(p_2-1)^{1-\delta}\rad(p_1-1,p_2-1)^\delta (p_1p_2)^{\delta-1}\\
	&\leq 2\sum_{a \leq t} a^\delta\sum_{p_1,p_2 \equiv 1 \bmod a}\rad(p_1-1)^{1-\delta}\rad(p_2-1)^{1-\delta} (p_1p_2)^{\delta-1}\\
	&\leq 2\sum_{a \leq t} a^\delta \pi(t,a,1)^2 \ll \Li(t)^2
\end{align*}
We conclude using Lemma~\ref{lem:limsup} that
$\bfP\left(\limsup_{t\to\infty} \frac{X_t}{\Li(t)} > 0\right) > 0$.
Positivity of $\limsup_{t\to\infty} \frac{X_t}{\Li(t)}$ does not depend on any finite set of primes in $S$, hence it is a tail event and Kolmogorov's 0-1 law implies $\bfP\left(\limsup_{t\to\infty} \frac{X_t}{\Li(t)} > 0\right) = 1$.

This allows us to conclude the proof. Let $S$ be a set of primes with $\limsup_{t\to \infty} \frac{X_t(S)}{\Li(t)} > 0$. Then there is $c >0$ and an sequence $t_1 < t_2 <\dots$ tending to infinity, such that
\[\sum_{p\leq t_j} \rad_S(p-1) p^{\delta-1} > c \Li(t_j)\]
for all $j$.
Let $\epsilon > 0$ be arbitrary. Clearly
\[
	\sum_{p\leq t_j} \rad_S(p-1) p^{\delta-2+\epsilon} \geq c\Li(t_j) t_j^{-1+\epsilon}
\]
for all $j$ and the right hand side tends to infinity with $t_j$. Hence $\alpha(G_S) > 2-\delta-\epsilon$ almost surely for all $\epsilon > 0$.
\end{proof}

\end{document}